\newtheorem{theorem}{Theorem}[section]
\theoremstyle{plain}
\newtheorem*{theorem*}{Theorem}
\theoremstyle{plain}
\newtheorem{definition}{Definition}[section]
\newtheorem{lemma}{Lemma}[section]
\newtheorem{remark}{Remark}[section]
\numberwithin{equation}{section}
\DeclareRobustCommand{\rchi}{{\mathpalette\irchi\relax}}
\newcommand{\irchi}[2]{\raisebox{\depth}{$#1\chi$}}
\title[A Strong Maximum Principle for the Fractional Laplacian]{A Strong Maximum Principle for the fractional Laplace equation with Mixed Boundary Condition}
\keywords{Fractional Laplacian, Maximum Principle, Mixed Boundary Conditions.}%
\subjclass[2010]{Primary 35B50, 35R11, 35S15}
\author{Rafael López-Soriano}
\author{Alejandro Ortega}
\email[R. López-Soriano]{ralopezs@math.uc3m.es}%
\email[A. Ortega ]{alortega@math.uc3m.es}
\address[R. López-Soriano, A. Ortega]{Departamento de Matem\'aticas,  
Universidad Carlos III de Madrid, Av. Universidad 30, 28911 Legan\'es (Madrid), Spain}
\thanks{This work has been supported by the Madrid Government (Comunidad de Madrid-Spain) under the Multiannual Agreement with UC3M in the line of Excellence of University Professors (EPUC3M23), and in the context of the V PRICIT (Regional Programme of Research and Technological Innovation).\\ The authors are partially supported by the Ministry of Economy and Competitiveness of Spain, under research project PID2019-106122GB-I00. }
\begin{document}
\maketitle

\begin{abstract}
In this work we prove a strong maximum principle for fractional elliptic problems with mixed Dirichlet--Neumann boundary data which extends the one proved by J. D\'avila (cf. \cite{Davila2001}) to the fractional setting. In particular, we present a comparison result for two solutions of the fractional Laplace equation involving the spectral fractional Laplacian endowed with homogeneous mixed boundary condition. This result represents a non--local counterpart to a Hopf's Lemma for fractional elliptic problems with mixed boundary data.
\end{abstract}

\section{Introduction}
The aim of this paper is to prove a strong maximum principle for elliptic problems involving a fractional Laplacian operator and homogeneous mixed boundary data. In particular, we consider the problem
\begin{equation}\label{problema1}\tag{$P_s$}
        \left\{
        \begin{tabular}{rcl}
        $(-\Delta)^su=f$ & &\mbox{in } $\Omega$, \\
        $B(u)=0$  & &on $\partial\Omega$,
        \end{tabular}
        \right.
\end{equation}
where  $\frac{1}{2}<s<1$, $f\in \mathcal{C}_0^{\infty}(\Omega)$, $f\gneq 0$ and $\Omega$ is a smooth bounded domain of $\mathbb{R}^N$ with $N\geq2$. Here $(-\Delta)^s$ denotes the \textit{spectral fractional Laplacian} defined through the spectral decomposition of the classical Laplacian with mixed Dirichlet--Neumann boundary condition $B(u)$ (see Section \ref{Functionalsetting} for further details) given by
\begin{equation*}
B(u)=u\rchi_{\Sigma_{\mathcal{D}}}+\frac{\partial u}{\partial \nu} \rchi_{\Sigma_{\mathcal{N}}},
\end{equation*}
where $\nu$ is the outward normal to $\partial\Omega$ and $\chi_A$ stands for the characteristic function of the set $A\subset\partial\Omega$. The sets $\Sigma_{\mathcal{D}}$ and $\Sigma_{\mathcal{N}}$ satisfy the following
\begin{itemize}
\item $\Sigma_{\mathcal{D}}$ and $\Sigma_{\mathcal{N}}$ are $(N-1)$-dimensional smooth submanifolds of $\partial\Omega$,
\item $\Sigma_{\mathcal{D}}$ is a closed (with respect to the relative topology) manifold of positive $(N-1)$-dimensional Lebesgue measure,
\item $|\Sigma_{\mathcal{D}}|=\alpha\in(0,|\partial\Omega|)$,
\item $\Sigma_{\mathcal{D}}\cap\Sigma_{\mathcal{N}}=\emptyset$, $\Sigma_{\mathcal{D}}\cup\Sigma_{\mathcal{N}}=\partial\Omega$ and $\Sigma_{\mathcal{D}}\cap\overline{\Sigma}_{\mathcal{N}}=\Gamma$,
\item $\Gamma$ a smooth $(N-2)$-dimensional submanifold of $\partial\Omega$.
\end{itemize}
As in the local case, i.e. $s=1$, by comparison one can easily prove that, for $C_1=\max\limits_{x\in\Omega}f(x)$,
\begin{equation*}
u(x)\leq C_1 v(x)\quad\text{for } x\in\Omega,
\end{equation*}
with $v$ being the solution to 
\begin{equation}\label{problema_v_1}     
\left\{
        \begin{tabular}{rcl}
        $(-\Delta)^sv=1$ & &\mbox{in } $\Omega$, \\
        $B(v)=0$  & &on $\partial\Omega$.
        \end{tabular}
        \right.
\end{equation}
So a natural question is whether the opposite inequality, namely,
\begin{equation}\label{opositeineq}
v(x)\leq C_2 u(x)\quad\text{for } x\in\Omega,
\end{equation}
holds true for some constant $C_2>0$.

For the local case, D\'avila (cf. \cite{Davila2001}) proved that inequality \eqref{opositeineq} holds for a positive constant $C_2$ depending on $\Omega$, $\Sigma_{\mathcal{D}}$, $\Sigma_{\mathcal{N}}$ and $\|fv\|_{L^1(\Omega)}$. We will obtain a similar result for the mixed boundary data problem \eqref{problema1} by adapting the approach of D\'avila to our fractional setting. To that end we will also use the regularity results proved in \cite{Carmona2020}.\newline
Let us remark that in \cite{Barrios2020} the authors proved a fractional strong maximum principle, but dealing with a different fractional operator which is defined by means of a singular integral.\newline
Our main aim is then to prove the following.
\begin{theorem}\label{teo:dav}
Assume that $f\in C_0^\infty(\Omega)$, $f\geq0$ and let  $u$ be the solution to 
\eqref{problema1}. Then there exists a constant $C=C(N,s,\Omega,\Sigma_{\mathcal{D}},\Sigma_{\mathcal{N}})>0$ such that
\begin{equation*}
u(x)\geq C \left(\int_\Omega f v dz\right)v(x) \qquad \mbox{for } x\in\Omega\,,
\end{equation*}
being $v$ the solution to problem \eqref{problema_v_1}.
\end{theorem}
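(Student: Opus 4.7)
The plan is to follow D\'avila's original strategy \cite{Davila2001} and adapt it to the fractional setting, relying on the boundary and interior regularity for the spectral fractional Laplacian with mixed boundary data developed in \cite{Carmona2020}. The natural starting point is the Green's function $G_s(x,y)$ of the operator $(-\Delta)^s$ with homogeneous mixed boundary condition $B(\cdot)=0$, in terms of which
\begin{equation*}
u(x)=\int_\Omega G_s(x,y)f(y)\,dy,\qquad v(x)=\int_\Omega G_s(x,y)\,dy.
\end{equation*}
With this representation, the statement reduces to the pointwise Green's function estimate
\begin{equation*}
G_s(x,y)\ge C\,v(x)\,v(y)\qquad\text{for all }x,y\in\Omega,
\end{equation*}
for some $C=C(N,s,\Omega,\Sigma_{\mathcal{D}},\Sigma_{\mathcal{N}})>0$. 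Indeed, multiplying by $f(y)\ge 0$ and integrating in $y$ yields exactly the inequality claimed in the theorem.

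To establish the Green's function bound, I would fix $y\in\Omega$ and study $w_y(x):=G_s(x,y)$. Away from the pole at $y$, the function $w_y$ is a nonnegative solution of the homogeneous mixed boundary value problem, and hence it shares the boundary trace of $v$ on $\Sigma_{\mathcal{D}}$ and the vanishing conormal derivative on $\Sigma_{\mathcal{N}}$. Using the Hopf-type boundary behavior and the interior regularity for $(-\Delta)^s$ with mixed data from \cite{Carmona2020}, together with an interior Harnack-type inequality (working, if needed, in the Caffarelli--Silvestre extension so that $\Sigma_{\mathcal{N}}$ is handled by reflection), one shows that on $\Omega\setminus B_\rho(y)$ the quotient $w_y/v$ is bounded below by a positive constant depending only on $\rho$ and the geometry, while near the pole $w_y$ is large. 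This yields
\begin{equation*}
G_s(x,y)\ge c(y)\,v(x)\qquad\text{for every }x\in\Omega,
\end{equation*}
with $c(y)>0$. To track the $y$-dependence of $c(y)$, I would exploit the symmetry $G_s(x,y)=G_s(y,x)$: evaluating at a fixed interior reference point $x=x_0$ and repeating the same boundary/Harnack analysis in the variable $y$ shows that $G_s(x_0,y)\ge c_0\,v(y)$ for a geometric constant $c_0$; a bootstrap between the two estimates then forces $c(y)\ge C\,v(y)$, closing the argument.

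The main obstacle I expect is the fine behavior of $G_s$ near the interface $\Gamma=\Sigma_{\mathcal{D}}\cap\overline{\Sigma}_{\mathcal{N}}$ where the boundary condition transitions from Dirichlet to Neumann. There, solutions to the spectral fractional Laplace equation have only limited H\"older regularity, so the comparison between $w_y$ and $v$ must be performed quantitatively, using in an essential way the sharp boundary regularity of \cite{Carmona2020}; in particular, one needs to ensure that the multiplicative constants in the Harnack and Hopf estimates can be chosen uniformly in $y$, apart from the factor $v(y)$ that is ultimately recovered by symmetry. A secondary technical point is adapting the interior Harnack inequality for the spectral fractional Laplacian to domains with mixed data near $\Sigma_{\mathcal{N}}$, which should follow by even reflection across the Neumann part in the extension problem. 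Once the Green's function bound is in place, the conclusion $u(x)\ge C\bigl(\int_\Omega fv\,dz\bigr)v(x)$ is immediate.
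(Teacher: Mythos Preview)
Your route via the two--sided Green's function estimate $G_s(x,y)\ge C\,v(x)\,v(y)$ is genuinely different from the paper's, but the proposal leaves the hard step open rather than proving it. The entire difficulty of the problem is concentrated in the quantitative comparison ``$w_y(x)\ge c(y)\,v(x)$ with $c(y)\asymp v(y)$'': the ingredients you invoke---a Hopf--type lower bound at $\Sigma_{\mathcal D}$, a boundary Harnack principle up to $\Gamma$, and an interior Harnack uniform in the pole---are \emph{not} available in the literature for the spectral fractional Laplacian with mixed Dirichlet--Neumann data. The reference \cite{Carmona2020} gives only $\mathcal C^{0,\gamma}$ regularity with $\gamma<\tfrac12$ near $\Gamma$, which is far from a Hopf lemma; even reflection handles $\Sigma_{\mathcal N}$ but says nothing at the interface. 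Your bootstrap is also circular as written: from $G_s(x,y)\ge c(y)v(x)$ and, by symmetry, $G_s(x,y)\ge c(x)v(y)$, evaluating at $x=x_0$ yields $G_s(x_0,y)\ge c(x_0)v(y)$ and $G_s(x_0,y)\ge c(y)v(x_0)$, but neither implies $c(y)\ge C\,v(y)$ unless you already have a \emph{uniform} boundary Harnack estimate for the quotient $G_s(\cdot,y)/v(\cdot)$, which is precisely the missing piece.

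The paper avoids Green's functions altogether. Its core is Theorem~\ref{lem:davila2}, a quantitative Hopf--type statement $\|v/u\|_{L^\infty}\le C\|g\|_{L^p}$ whenever $(-\Delta)^s u=f\gneq 0$ and $(-\Delta)^s v=g$ with mixed data; this is obtained through a weighted Sobolev inequality for the extension (Lemma~\ref{lem:wsob}, using the fractional Hardy inequality and the bound $\phi_1^s\le C\,u$) followed by a Stampacchia iteration on $(V/(U+\varepsilon)-k)_+$. Theorem~\ref{teo:dav} is then deduced by a local argument \`a la Br\'ezis--Cabr\'e: one checks that $u+w$ is superharmonic for a suitable auxiliary $w$, uses the classical mean--value inequality to bound $u(x)$ from below by a ball average, compares with an auxiliary solution $u_0$ driven by a bump $f_0$, and finally applies Theorem~\ref{lem:davila2} to get $u_0\ge C_0 v$ and a maximum principle in $\Omega\setminus \overline{B_\rho(x_0)}$. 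If you want to rescue the Green's function approach, the honest way is to observe that Theorem~\ref{lem:davila2} is exactly the substitute for the boundary Harnack you need; but then your proof would pass through the same key lemma as the paper's.
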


The key result we need to prove to obtain Theorem \ref{teo:dav} is an $L^{\infty}$ bound on the ratio between the solution to \eqref{problema1} with a nonnegative $f\in L^{\infty} (\Omega)$ and the solution to a suitable auxiliary problem. In particular, let $v\in H_{\Sigma_{\mathcal{D}}}^s(\Omega)$ be the solution to 
\begin{equation}\label{prob_v_com_1}
\left\{
        \begin{tabular}{rcl}
        $(-\Delta)^sv=g$ & &\mbox{in } $\Omega$, \\
        $B(v)=0$  & &on $\partial\Omega$,
        \end{tabular}
        \right.
\end{equation}
with $g\in L^p(\Omega)$, $p>N/s$ and $g\gneq 0$. Then, the next result holds. 

\begin{theorem}\label{lem:davila2}
Let $u$ be the solution to \eqref{problema1} with $f\in L^\infty(\Omega)$, $f\gneq0$ and $g\in L^p(\Omega)$ for some $p>N/s$ and let $v$ be the solution to \eqref{prob_v_com_1}. Then there exists a constant $C>0$ such that 
\begin{equation*}
\left\| \ \frac vu   \  \right \|_ {L^\infty(\Omega)}\leq C\|g\|_{L^p(\Omega)}
\end{equation*}
with $C$ depending on $N$, $p$, $s$, $\Omega$, $\Sigma_{\mathcal{D}}$, $\|u\|_{L^{\infty}(\Omega)}$, $\|f\|_{L^{\infty}(\Omega)}$ and $1/(\int_{\Omega}f(z)d(z)dz)$ where $d(x)=\textrm{dist}(x,\partial\Omega)$.
\end{theorem}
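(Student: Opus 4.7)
The strategy is to bound $v/u$ in two complementary regions: a tubular neighborhood
$\mathcal{N}_\delta := \{x \in \Omega : d(x, \Sigma_{\mathcal{D}}) < \delta\}$ of the Dirichlet face, and its complement in $\Omega$. On $\mathcal{N}_\delta$ the estimate will be obtained by matching a sharp upper $s$-boundary bound for $v$ against a Hopf-type lower bound for $u$, both behaving like $d(\cdot, \Sigma_{\mathcal{D}})^s$ (the natural decay induced by the spectral fractional Laplacian under homogeneous Dirichlet data). Outside $\mathcal{N}_\delta$ the claim reduces to an interior positive lower bound on $u$, provided by the fractional strong maximum principle together with interior Green's function bounds.

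For the estimate on $\mathcal{N}_\delta$, I first invoke the regularity theory of \cite{Carmona2020}: since $g \in L^p(\Omega)$ with $p > N/s$, the solution $v$ of \eqref{prob_v_com_1} satisfies
$$ v(x) \leq C_1 \|g\|_{L^p(\Omega)}\, d(x, \Sigma_{\mathcal{D}})^s, \qquad x \in \Omega, $$
with $C_1 = C_1(N, s, p, \Omega, \Sigma_{\mathcal{D}})$. For the lower bound on $u$, I use the Green representation $u(x) = \int_\Omega G_{s,\Omega}(x,y) f(y)\, dy$, where $G_{s,\Omega}$ is the Green's function of $(-\Delta)^s$ with mixed homogeneous data $B(\cdot)=0$, together with a two-sided estimate of the form
$$ G_{s,\Omega}(x,y) \geq c_0 \min\!\left( |x-y|^{2s-N},\ \frac{d(x, \Sigma_{\mathcal{D}})^s\, d(y, \partial\Omega)^s}{|x-y|^N}\right). $$
Inserting this bound and absorbing $d(y,\partial\Omega)^s \geq (\mathrm{diam}\,\Omega)^{s-1} d(y,\partial\Omega)$ yields
$$ u(x) \geq c_2\, d(x, \Sigma_{\mathcal{D}})^s \int_\Omega f(z)\, d(z)\, dz, \qquad x \in \mathcal{N}_\delta, $$
and dividing the two displays gives the claimed ratio bound on $\mathcal{N}_\delta$, with constant proportional to $\|g\|_{L^p(\Omega)}/\!\int f(z) d(z)\,dz$.

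On $\Omega \setminus \mathcal{N}_\delta$, the same Green representation, now evaluated for $x$ at definite distance from $\Sigma_{\mathcal{D}}$, yields $u(x) \geq c_3(\delta)\!\int_\Omega f(z) d(z)\, dz$ using the interior positivity of $G_{s,\Omega}$ (a consequence of the fractional strong maximum principle since $f \gneq 0$), while $\|v\|_{L^\infty(\Omega)} \leq C \|g\|_{L^p(\Omega)}$ follows once more from \cite{Carmona2020}. Patching the two regions gives the desired $L^\infty$ bound on $v/u$ with the asserted dependence of the constant.

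The main obstacle is the two-sided Hopf-type estimate for the mixed-boundary Green's function $G_{s,\Omega}$, with the explicit $d(x, \Sigma_{\mathcal{D}})^s$ decay at the Dirichlet face. For the purely Dirichlet case such estimates are classical (via the Stinga--Torrea / Cabré--Tan extension, heat-kernel bounds, or an eigenfunction expansion argument), but the presence of the Neumann face $\Sigma_{\mathcal{N}}$ prevents a direct transplantation. I would handle this either by passing to the extended cylinder $\Omega \times (0, \infty)$ where $(-\Delta)^s$ is realized as a Dirichlet-to-Neumann operator, applying the classical Hopf lemma on the lateral Dirichlet face $\Sigma_{\mathcal{D}} \times (0, \infty)$ and then tracing the resulting normal-derivative inequality back to the $s$-decay on $\Omega$; or by localizing to coordinate charts near $\Sigma_{\mathcal{D}}$ in which $\Sigma_{\mathcal{N}}$ is uniformly far, thereby reducing to a local Dirichlet-type problem whose Hopf behavior is already available in the fractional literature.
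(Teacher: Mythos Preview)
Your approach is fundamentally different from the paper's, which never attempts to identify the pointwise boundary behaviour of either $u$ or $v$. Instead the paper works in the extension cylinder, tests the equations for $U$ and $V$ with $\varphi_\varepsilon=(V/(U+\varepsilon)-k)_+$, and feeds the resulting inequality into a weighted Sobolev-type estimate (Lemma~\ref{lem:wsob}) followed by a Stampacchia iteration on the level sets of $w=v/u$. The virtue of that argument is precisely that it compares $v$ to $u$ \emph{directly}, without ever having to know how each one separately decays near $\partial\Omega$.

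Your proposal, by contrast, rests on two matching one-sided boundary estimates, and this is where the genuine gap lies. The difficulty is concentrated at the interface $\Gamma=\Sigma_{\mathcal D}\cap\overline{\Sigma_{\mathcal N}}$. The claim $v(x)\le C_1\|g\|_{L^p}\,d(x,\Sigma_{\mathcal D})^s$ is not delivered by \cite{Carmona2020}: that reference gives global $\mathcal C^\gamma$ regularity for some $\gamma\in(0,\tfrac12)$, which yields decay like $d(x,\Sigma_{\mathcal D})^\gamma$, not $d(x,\Sigma_{\mathcal D})^s$, near $\Gamma$. Likewise, the two-sided Green's function bound you write down is the pure-Dirichlet estimate; for the mixed problem the behaviour near $\Gamma$ is not of Hopf type and is in fact one of the reasons D\'avila's original theorem is nontrivial already for $s=1$. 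Your two proposed remedies both fail exactly there: applying the classical Hopf lemma on the lateral face $\Sigma_{\mathcal D}^*$ of the cylinder requires a smooth boundary point, which $\Gamma^*$ is not, and ``localizing to charts in which $\Sigma_{\mathcal N}$ is uniformly far'' is impossible in any neighbourhood of $\Gamma$. So the decomposition $\mathcal N_\delta\cup(\Omega\setminus\mathcal N_\delta)$ leaves the hardest region---a neighbourhood of $\Gamma$---uncontrolled.

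In short, the heart of the theorem is that the ratio $v/u$ is bounded even though neither the upper profile of $v$ nor the lower profile of $u$ near $\Gamma$ is explicitly known; your argument presupposes exactly this missing information. If you want to salvage a barrier/Green's-function approach you would first have to establish sharp matching asymptotics for the mixed Green's function at $\Gamma$, which is a substantial result in its own right and not available in the cited literature.
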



\section{Functional setting and preliminaries}\label{Functionalsetting}

As far as the fractional Laplace operator is concerned, we recall its definition given through the spectral decomposition. Let
$(\varphi_i,\lambda_i)$ be the eigenfunctions (normalized with respect to the $L^2(\Omega)$-norm) and the eigenvalues of
$(-\Delta)$ equipped with homogeneous mixed Dirichlet--Neumann boundary data, respectively. Then, $(\varphi_i,\lambda_i^s)$ are the eigenfunctions and eigenvalues of the fractional operator
$(-\Delta)^s$, where, given $\displaystyle u_i(x)=\sum_{j\geq1}\langle u_i,\varphi_j\rangle\varphi_j$, $i=1,2$, it holds
\begin{equation*}
\langle(-\Delta)^s u_1, u_2\rangle=\sum_{j\ge 1} \lambda_j^s\langle u_1,\varphi_j\rangle \langle u_2,\varphi_j\rangle,
\end{equation*}
i.e., the action of the fractional operator on a smooth function $u_1$ is given by
\begin{equation*}
(-\Delta)^su_1=\sum_{j\ge 1} \lambda_j^s\langle u_1,\varphi_j\rangle\varphi_j.
\end{equation*}
As a consequence, the fractional Laplace operator $(-\Delta)^s$ is well defined through its spectral decomposition in the
following space of functions that vanish on $\Sigma_{\mathcal{D}}$,
\begin{equation*}
H_{\Sigma_{\mathcal{D}}}^s(\Omega)=\left\{u=\sum_{j\ge 1} a_j\varphi_j\in L^2(\Omega):\ \|u\|_{H_{\Sigma_{\mathcal{D}}^s}(\Omega)}^2=
\sum_{j\ge 1} a_j^2\lambda_j^s<\infty\right\}.
\end{equation*}
Observe that since $u\in H_{\Sigma_{\mathcal{D}}}^s(\Omega)$, it follows that
\begin{equation*}
\|u\|_{H_{\Sigma_{\mathcal{D}}}^s(\Omega)}=\left\|(-\Delta)^{\frac{s}{2}}u\right\|_{L^2(\Omega)}.
\end{equation*}
As it is proved in \cite[Theorem 11.1]{Lions1972}, if $0<s\le \frac{1}{2}$ then $H_0^s(\Omega)=H^s(\Omega)$ and, therefore, also
$H_{\Sigma_{\mathcal{D}}}^s(\Omega)=H^s(\Omega)$, while for $\frac 12<s<1$, $H_0^s(\Omega)\subsetneq H^s(\Omega)$. Hence,
the range $\frac 12<s<1$ guarantees that $H_{\Sigma_{\mathcal{D}}}^s(\Omega)\subsetneq H^s(\Omega)$ and it provides us the correct functional space to study the mixed boundary problem \eqref{problema1}.\newline
This definition of the fractional powers of the Laplace operator allows us to integrate by parts in the appropriate spaces, so that a natural definition of weak solution to problem \eqref{problema1} is the following.
\begin{definition}
We say that $u\in H_{\Sigma_{\mathcal{D}}}^s(\Omega)$ is a solution to \eqref{problema1} if
\begin{equation*}
\int_{\Omega}(-\Delta)^{s/2}u \,(-\Delta)^{s/2}\psi dx=\int_{\Omega}f\psi dx\quad \text{for any}\ \psi\in H_{\Sigma_{\mathcal{D}}}^s(\Omega).
\end{equation*}
\end{definition}
Due to the nonlocal nature of the fractional operator $(-\Delta)^s$ some difficulties arise when one tries to obtain an explicit
expression  of the action of the fractional Laplacian on a given function. In order to overcome these difficulties, we use the ideas by Caffarelli and Silvestre (see \cite{Caffarelli2007}) together with those of \cite{Braendle2013,Cabre2010,Capella2011}  to give an equivalent definition of the operator $(-\Delta)^s$ by means of an auxiliary problem that we introduce next.\newline
Given a domain $\Omega \subset \mathbb{R}^N$, we set the cylinder $\mathscr{C}_{\Omega}=\Omega\times(0,\infty)\subset\mathbb{R}_+^{N+1}$. We denote by $(x,y)$ those points that belong to $\mathscr{C}_{\Omega}$ and by $\partial_L\mathscr{C}_{\Omega}=\partial\Omega\times[0,\infty)$ the lateral boundary of the cylinder. Let us also denote by  $\Sigma_{\mathcal{D}}^*=\Sigma_{\mathcal{D}}\times[0,\infty)$ and $\Sigma_{\mathcal{N}}^*=\Sigma_{\mathcal{N}}\times[0,\infty)$ as well as $\Gamma^*=\Gamma\times[0,\infty)$. It is clear that, by construction,
\begin{equation*}
\Sigma_{\mathcal{D}}^*\cap\Sigma_{\mathcal{N}}^*=\emptyset\,, \quad \Sigma_{\mathcal{D}}^*\cup\Sigma_{\mathcal{N}}^*=\partial_L\mathscr{C}_{\Omega} \quad \mbox{and} \quad \Sigma_{\mathcal{D}}^*\cap\overline{\Sigma_{\mathcal{N}}^*}=\Gamma^*\,.
\end{equation*}
 Given a function $u\in H_{\Sigma_{\mathcal{D}}}^s(\Omega)$ we define its $s$-harmonic extension, denoted by $U (x,y)=E_{s}[u(x)]$, as the solution to the problem
\begin{equation*}
        \left\{
        \begin{array}{rlcl}
           -\text{div}(y^{1-2s}\nabla U (x,y))&\!\!\!\!=0  & & \mbox{ in } \mathscr{C}_{\Omega} , \\
          B(U(x,y) )&\!\!\!\!=0   & & \mbox{ on } \partial_L\mathscr{C}_{\Omega} , \\
          U(x,0)&\!\!\!\!=u(x)  & &  \mbox{ on } \Omega\times\{y=0\} ,
        \end{array}
        \right.
\end{equation*}
where
\begin{equation*}
B(U)=U\rchi_{\Sigma_{\mathcal{D}}^*}+\frac{\partial U}{\partial \nu}\rchi_{\Sigma_{\mathcal{N}}^* },
\end{equation*}
being $\nu$, with an abuse of notation\footnote{Let $\nu$ be the outward normal to $\partial\Omega$ and $\nu_{(x,y)}$ the outward normal to $\mathscr{C}_{\Omega}$ then, by construction, $\nu_{(x,y)}=(\nu,0)$, $y>0$.}, the outward normal to $\partial_L\mathscr{C}_{\Omega}$. The extension function belongs to the space
\begin{equation*}
\mathcal{X}_{\Sigma_{\mathcal{D}}}^s(\mathscr{C}_{\Omega}) : =\overline{\mathcal{C}_{0}^{\infty}
((\Omega\cup\Sigma_{\mathcal{N}})\times[0,\infty))}^{\|\cdot\|_{\mathcal{X}_{\Sigma_{\mathcal{D}}}^s(\mathscr{C}_{\Omega})}},
\end{equation*}
where we define
\begin{equation*}
\|\cdot\|_{\mathcal{X}_{\Sigma_{\mathcal{D}}}^s(\mathscr{C}_{\Omega})}^2:=\kappa_s\int_{\mathscr{C}_{\Omega}}\mkern-5mu y^{1-2s} |\nabla (\cdot)|^2dxdy,
\end{equation*}
with $\kappa_s=2^{2s-1}\frac{\Gamma(s)}{\Gamma(1-s)}$ being $\Gamma(s)$ the Gamma function.\newline
Note that $\mathcal{X}_{\Sigma_{\mathcal{D}}}^s(\mathscr{C}_{\Omega})$ is a Hilbert space equipped with the norm
$\|\cdot\|_{\mathcal{X}_{\Sigma_{\mathcal{D}}}^s(\mathscr{C}_{\Omega})}$ which is induced by the scalar product
\begin{equation*}
\langle U, V \rangle_{\mathcal{X}_{\Sigma_{\mathcal{D}}}^s(\mathscr{C}_{\Omega})}=\kappa_s
\int_{\mathscr{C}_{\Omega}}y^{1-2s} \langle\nabla U,\nabla V\rangle dxdy.
\end{equation*}
Moreover, the following inclusions are satisfied,
\begin{equation} \label{embedd}
\mathcal{X}_0^s(\mathscr{C}_{\Omega}) \subset \mathcal{X}_{\Sigma_{\mathcal{D}}}^s(\mathscr{C}_{\Omega}) \subsetneq \mathcal{X}^s(\mathscr{C}_{\Omega}),
\end{equation}
being  $\mathcal{X}_0^s(\mathscr{C}_{\Omega})$ the space of functions that belongs to $\mathcal{X}^s(\mathscr{C}_{\Omega})\equiv H^1(\mathscr{C}_{\Omega},y^{1-2s}dxdy)$ and vanish on the lateral boundary of $\mathscr{C}_{\Omega}$, denoted by $\partial_L\mathscr{C}_{\Omega}$.\newline
Following the well known result by Caffarelli and Silvestre (see \cite{Caffarelli2007}), $U$ is related to the fractional Laplacian of the original function through the formula
\begin{equation*}
\frac{\partial U}{\partial \nu^s}:= -\kappa_s\lim_{y\to 0^+} y^{1-2s}\frac{\partial U}{\partial y}=(-\Delta)^su(x).
\end{equation*} 
Using the above arguments we can reformulate the problem \eqref{problema1} in terms of the extension problem as follows:
\begin{equation}\label{extension_problem}
        \left\{
        \begin{array}{rlcl}
           -\text{div}(y^{1-2s}\nabla U)&\!\!\!\!=0  & & \mbox{ in } \mathscr{C}_{\Omega} , \\
         B(U)&\!\!\!\!=0   & & \mbox{ on } \partial_L\mathscr{C}_{\Omega} , \vspace{0.05cm}\\
         \displaystyle \frac{\partial U}{\partial \nu^s}&\!\!\!\!=f  & &  \mbox{ on } \Omega\times\{y=0 \},
        \end{array}
        \right.
        \tag{$P_{s}^*$}
\end{equation}
and we have that $u(x) = U(x,0)$.
\medskip

Next, we specify the meaning of solution to problem \eqref{extension_problem} and its relationship with the solutions to problem \eqref{problema1}.
\begin{definition}
An  energy solution to problem \eqref{extension_problem} is a function $U\in \mathcal{X}_{\Sigma_{\mathcal{D}}}^s(\mathscr{C}_{\Omega})$ such that
\begin{equation}\label{def:soldebil}
\kappa_s\int_{\mathscr{C}_{\Omega}} y^{1-2s} \nabla U\nabla\varphi dxdy=\int_{\Omega} f(x)\varphi(x,0)dx\quad \text{for all }  \varphi\in \mathcal{X}_{\Sigma_{\mathcal{D}}}^s(\mathscr{C}_{\Omega}).
\end{equation}
\end{definition}
If  $U\in \mathcal{X}_{\Sigma_{\mathcal{D}}}^s(\mathscr{C}_{\Omega})$  is the solution to problem \eqref{extension_problem}, we can associate the function $u (x) =Tr[U(x,y) ]=U(x,0)$, that  belongs to $H_{\Sigma_{\mathcal{D}}}^s(\Omega)$, and solves problem \eqref{problema1}.
Moreover, also the vice versa is true: given the solution $u \in H_{\Sigma_{\mathcal{D}}}^s(\Omega)$ to \eqref{problema1} its $s$-harmonic extension $U=E_s[u(x)]\in \mathcal{X}_{\Sigma_{\mathcal{D}}}^s(\mathscr{C}_{\Omega})$
is the solution to \eqref{extension_problem}.
Thus, both formulations are equivalent and the {\it Extension operator}
$$
E_s: H_{\Sigma_{\mathcal{D}}}^s(\Omega) \to \mathcal{X}_{\Sigma_{\mathcal{D}}}^s(\mathscr{C}_{\Omega}),
$$
allows us to switch between each other.\newline 
Moreover, according to \cite{Braendle2013, Caffarelli2007}, due to the choice of the constant $\kappa_s$, the extension operator $E_s$ is an isometry, i.e.,
\begin{equation}\label{norma2}
\|E_s[\varphi] (x,y) \|_{\mathcal{X}_{\Sigma_{\mathcal{D}}}^s(\mathscr{C}_{\Omega})}=
\|\varphi (x) \|_{H_{\Sigma_{\mathcal{D}}}^s(\Omega)}\quad \text{for all}\ \varphi\in H_{\Sigma_{\mathcal{D}}}^s(\Omega).
\end{equation}
Let us also recall the \textit{trace inequality} (cf. \cite{Braendle2013}) that is a useful tool to be exploited along this paper:
there exists $C=C(N,s,r,|\Omega|)$ such that for all $z\in\mathcal{X}_0^s(\mathscr{C}_{\Omega})$ we have
\begin{equation*}
C\left(\int_{\Omega}|z(x,0)|^rdx\right)^{\frac{2}{r}}\leq \int_{\mathscr{C}_{\Omega}}y^{1-2s}|\nabla z(x,y)|^2dxdy,
\end{equation*}
with  $1\leq r\leq 2^*_s,\ N>2s$, with $2^*_s= \frac{2N}{N-2s}$. 
Observe that, because of \eqref{norma2}, the trace inequality turns out to be, in fact, equivalent to the fractional Sobolev inequality:
\begin{equation}\label{sob}
  C\left(\int_{\Omega}|v|^rdx\right)^{\frac{2}{r}}\leq \int_{\Omega}|(-\Delta)^{\frac{s}2}v|^2dx
\qquad\text{for all } v\in H_{0}^s(\Omega),\ 1\leq r\leq 2^*_s ,\ N>2s.
\end{equation}
If $r=2_s^*$ the best constant in \eqref{sob}, namely the fractional Sobolev constant, denoted by $S(N,s)$, is independent of the domain $\Omega$ and its exact value is given by 
\begin{equation*}
S(N,s)=2^{2s}\pi^s\frac{\Gamma(\frac{N+2s}{2})}{\Gamma(\frac{N-2s}{2})}\left(\frac{\Gamma(\frac{N}{2})}{\Gamma(N)}\right)^{\frac{2s}{N}}.
\end{equation*}

When mixed boundary conditions are considered, the situation is quite similar since the Dirichlet condition is imposed on a set $\Sigma_{\mathcal{D}} \subset \partial \Omega$ such that $|\Sigma_{\mathcal{D}}|=\alpha>0$. Hence, thanks to \eqref{embedd}, there exists a positive constant $C_{\mathcal{D}}=C_{\mathcal{D}}(N,s,|\Sigma_{\mathcal{D}}|)$ such that
\begin{equation}\label{const}
0<C_{\mathcal{D}}\vcentcolon=\inf_{\substack{u\in H_{\Sigma_{\mathcal{D}}}^s(\Omega)\\ u\not\equiv 0}}\frac{\|u\|_{H_{\Sigma_{\mathcal{D}}}^s(\Omega)}^2}{
\|u\|_{L^{2_s^*}(\Omega)}^2}
<\inf_{\substack{u\in H_{0}^s(\Omega)\\ u\not\equiv 0}}\frac{\|u\|_{H_{0}^s(\Omega)}^2}{\|u\|_{L^{2_s^*}(\Omega)}^2}.
\end{equation}
Moreover, we have (cf. \cite[Proposition 3.6]{Colorado2019}),
\begin{equation*}
C_{\mathcal{D}}(N,s,|\Sigma_{\mathcal{D}}|)\leq2^{-\frac{2s}{N}}S(N,s).
\end{equation*}
\begin{remark}
Due to the spectral definition of the fractional operator, using H\"older's inequality, we have $C_{\mathcal{D}}\leq|\Omega|^{\frac{2s}{N}}\lambda_1^s(\alpha)$, where $\lambda_1(\alpha)$ denotes the first eigenvalue of the Laplace operator endowed with mixed boundary conditions on $\Sigma_{\mathcal{D}}=\Sigma_{\mathcal{D}}(\alpha)$ and $\Sigma_{\mathcal{N}}= \Sigma_{\mathcal{N}}(\alpha)$. Since $\lambda_1(\alpha)\to0$ as $\alpha\to0^+$, (cf. \cite[Lemma 4.3]{Colorado2003}), we have $C_{\mathcal{D}}\to0$ as $\alpha\to0^+$.
\end{remark}

Gathering together \eqref{const} and \eqref{norma2} it follows that, for all $\varphi \in \mathcal{X}_{\Sigma_{\mathcal{D}}}^s(\mathscr{C}_{\Omega})$,
\begin{equation*}
C_{\mathcal{D}}\left(\int_\Omega |\varphi(x,0)|^{2^*_s} dx\right)^{\frac{2}{2^*_s}}\leq\|\varphi(x,0)\|_{H_{\Sigma_{\mathcal{D}}}^s(\Omega)}^2=\|E_s[\varphi(x,0)]\|_{\mathcal{X}_{\Sigma_{\mathcal{D}}}^s(\mathscr{C}_{\Omega})}^2.
\end{equation*}
This Sobolev--type inequality provides a trace inequality adapted to the mixed boundary data framework.
\begin{lemma}\cite[Lemma 2.4]{Colorado2019}\label{lem:traceineq}
There exists a constant $C_{\mathcal{D}}=C_{\mathcal{D}}(N,s,|\Sigma_{\mathcal{D}}|)>0$ such that,
\begin{equation}\label{eq:traceineq}
C_{\mathcal{D}}\left(\int_\Omega|\varphi(x,0)|^{2^*_s}  )dx\right)^{\frac{2}{2^*_s}}\leq\int_{\mathscr{C}_{\Omega}} y^{1-2s} |\nabla \varphi|^2 dxdy\quad \text{for all }\varphi \in \mathcal{X}_{\Sigma_{\mathcal{D}}}^s(\mathscr{C}_{\Omega}).
\end{equation}
\end{lemma}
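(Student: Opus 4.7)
The plan is to derive the trace inequality \eqref{eq:traceineq} by combining three ingredients already set up in the excerpt: the continuity of the trace map $\varphi \mapsto \varphi(\cdot,0)$ from $\mathcal{X}_{\Sigma_{\mathcal{D}}}^s(\mathscr{C}_{\Omega})$ into $H_{\Sigma_{\mathcal{D}}}^s(\Omega)$, the energy--minimality of the $s$--harmonic extension $E_s$, and the mixed--boundary fractional Sobolev inequality \eqref{const}. In short, I would use the extension operator to bounce a general $\varphi$ off of its trace and reduce the statement to a Sobolev inequality on the base $\Omega$.

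First I would fix $\varphi \in \mathcal{X}_{\Sigma_{\mathcal{D}}}^s(\mathscr{C}_{\Omega})$ and set $u(x)=\varphi(x,0)$. Using the defining approximation by smooth functions compactly supported in $(\Omega\cup\Sigma_{\mathcal{N}})\times[0,\infty)$, the traces of these approximants are smooth on $\overline{\Omega}$ and vanish on $\Sigma_{\mathcal{D}}$. Together with the standard weighted trace theory for the measure $y^{1-2s}dxdy$ (the relevant Muckenhoupt $A_2$ weight), this gives $u\in H_{\Sigma_{\mathcal{D}}}^s(\Omega)$ and continuity of the map $\varphi\mapsto u$.

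Next I would invoke the variational characterization of $E_s$: the $s$--harmonic extension $U=E_s[u]$ is the unique minimizer of $V\mapsto \kappa_s\int_{\mathscr{C}_{\Omega}} y^{1-2s}|\nabla V|^2\,dxdy$ among all $V\in \mathcal{X}_{\Sigma_{\mathcal{D}}}^s(\mathscr{C}_{\Omega})$ with $V(\cdot,0)=u$. Since $\varphi$ itself is such a competitor, this yields $\|E_s[u]\|_{\mathcal{X}_{\Sigma_{\mathcal{D}}}^s(\mathscr{C}_{\Omega})}^2\leq \|\varphi\|_{\mathcal{X}_{\Sigma_{\mathcal{D}}}^s(\mathscr{C}_{\Omega})}^2$. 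Combining with the isometry \eqref{norma2} gives $\|u\|_{H_{\Sigma_{\mathcal{D}}}^s(\Omega)}^2\leq \kappa_s\int_{\mathscr{C}_{\Omega}} y^{1-2s}|\nabla \varphi|^2\,dxdy$. Applying the Sobolev inequality \eqref{const} at the base level then produces
\begin{equation*}
C_{\mathcal{D}}\left(\int_\Omega |\varphi(x,0)|^{2^*_s}\,dx\right)^{\frac{2}{2^*_s}} \leq \|u\|_{H_{\Sigma_{\mathcal{D}}}^s(\Omega)}^2 \leq \kappa_s\int_{\mathscr{C}_{\Omega}} y^{1-2s}|\nabla \varphi|^2\,dxdy,
\end{equation*}
and absorbing $\kappa_s$ into the constant yields \eqref{eq:traceineq}.

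The main obstacle I anticipate is a careful justification of the energy--minimization step in the mixed setting: one must check that the Euler--Lagrange condition characterizing $E_s[u]$ uses test functions precisely from $\mathcal{X}_{\Sigma_{\mathcal{D}}}^s(\mathscr{C}_{\Omega})$, so that variations are allowed to be nonzero on $\Sigma_{\mathcal{N}}^*$ (giving the homogeneous Neumann condition on that portion) while vanishing on $\Sigma_{\mathcal{D}}^*$. A second technical point is ensuring, via the approximation procedure defining $\mathcal{X}_{\Sigma_{\mathcal{D}}}^s(\mathscr{C}_{\Omega})$, that the trace and the isometry \eqref{norma2} extend from the smooth dense class to all of $\mathcal{X}_{\Sigma_{\mathcal{D}}}^s(\mathscr{C}_{\Omega})$ without losing the vanishing condition on $\Sigma_{\mathcal{D}}$; once that is in place, the chain of inequalities above is purely mechanical.
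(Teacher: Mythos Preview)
Your approach is correct and matches the paper's own reasoning: the paragraph immediately preceding the lemma derives exactly the chain $C_{\mathcal{D}}\|\varphi(\cdot,0)\|_{L^{2^*_s}}^2\leq \|\varphi(\cdot,0)\|_{H_{\Sigma_{\mathcal{D}}}^s}^2=\|E_s[\varphi(\cdot,0)]\|_{\mathcal{X}_{\Sigma_{\mathcal{D}}}^s}^2$ by combining \eqref{const} with the isometry \eqref{norma2}, and then simply cites \cite[Lemma~2.4]{Colorado2019} for the full statement. The only step you add beyond the paper's sketch is the explicit energy--minimality inequality $\|E_s[\varphi(\cdot,0)]\|_{\mathcal{X}_{\Sigma_{\mathcal{D}}}^s}\leq\|\varphi\|_{\mathcal{X}_{\Sigma_{\mathcal{D}}}^s}$, which is indeed the (implicit) missing link needed to pass from the extension to an arbitrary $\varphi$, so your write--up is in fact a slightly more complete version of the same argument.
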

Along the proof of the Theorem \ref{teo:dav} we will make use of the following fractional Hardy inequality (cf. \cite[Theorem 3]{Filippas2013}): given $\frac12\leq s<1$, there exist a constant $C>0$ such that 
\begin{equation}\label{hardy_fract}
C\int_{\Omega}\left(\frac{f(x)}{d^{s}(x)}\right)^2dx\leq\|f\|_{H_0^s(\Omega)}^2\quad\text{for all }f\in H_0^s(\Omega),
\end{equation}
where $d(x)=\textrm{dist}(x,\partial\Omega)$.\newline
In order to establish the validity of \eqref{hardy_fract} we need to impose some geometrical or smoothness assumptions on the domain $\Omega$. From the geometrical point of view, if one assumes that $\Omega$ is such that, in the sense of distributions,
\begin{equation*}
-\Delta d(x)\geq 0 \quad\text{for } x\in\Omega,
\end{equation*}
then, inequality \eqref{hardy_fract} holds for the constant
$
C=C(s)=\frac{2^{2s}\Gamma^2(\frac{3+2s}{4})}{\Gamma^2(\frac{3-2s}{4})}.
$
The above condition is related to, but weaker than, the assumption of convexity of the domain $\Omega$. From the regularity point of view, if one considers a smooth domain $\Omega$, then inequality \eqref{hardy_fract} holds for a constant
$C\leq\frac{2^{2s}\Gamma^2(\frac{3+2s}{4})}{\Gamma^2(\frac{3-2s}{4})}$.
Finally, in terms of the $s$-harmonic extension the fractional Hardy inequality reads (cf. \cite[Theorem 1]{Filippas2013})
\begin{equation}\label{eq:hardy}
\int_\Omega \left(\frac{F(x,0)}{d^{s}(x)}\right)^2 dx\leq C\int_{\mathscr{C}_{\Omega}}y^{1-2s}|\nabla F(x,y)|^2 dxdy,
\end{equation}
for all $F\in \mathcal{X}_0^s(\mathscr{C}_{\Omega})$ and some constant $C>0$.

\section{Proof of main results}
In this section we prove Theorem \ref{lem:davila2} and, as a consequence, Theorem \ref{teo:dav}. Following the approach of \cite{Davila2001}, we start by proving the following weighted Sobolev--type inequality.

\begin{lemma}\label{lem:wsob}
Let $u\in H_{\Sigma_{\mathcal{D}}}^s(\Omega)$ be the solution to \eqref{problema1} with $f\in L^\infty(\Omega)$, $f\gneq 0$ and denote by $U\in  \mathcal{X}_{\Sigma_{\mathcal{D}}}^s(\mathscr{C}_{\Omega})$ its $s$-harmonic extension. Then, there exists a constant $C>0$ such that, for every $\varphi\in\mathcal{X}_{\Sigma_{\mathcal{D}}}^s(\mathscr{C}_{\Omega})\cap L^{\infty}(\mathscr{C}_{\Omega})$,
 \begin{equation}\label{eq:claim}
\left(\int_\Omega u^r(x) |\varphi(x,0)|^qdx \right)^\frac2q\leq C\left(\int_{\mathscr{C}_{\Omega}}  y^{1-2s} U^2 |\nabla \varphi|^2 dxdy + \int_\Omega
u^2(x) \varphi^2(x,0)dx\right),
\end{equation}
where $0\leq r\leq 2^*_s$ and $\frac{q}{2}=1+\frac{rs}{N}$ and the constant $C>0$ depends on $N$, $s$, $\Omega$, $\|u\|_{L^\infty(\Omega)}$, $\|f\|_{L^\infty(\Omega)}$ and $1/(\int_{\Omega}f(z)d(z)dz)$.
\end{lemma}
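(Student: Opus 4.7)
The plan is to follow D\'avila's strategy \cite{Davila2001} in the Caffarelli--Silvestre extension framework, proving \eqref{eq:claim} first at the two endpoints $r=2_s^*$ and $r=0$, then interpolating between them via H\"older's inequality.

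\emph{Endpoint $r=2_s^*$.} In this case $q=2_s^*$. The product $U\varphi$ lies in $\mathcal{X}_{\Sigma_{\mathcal{D}}}^s(\mathscr{C}_{\Omega})$ because $\varphi\in L^\infty$ and its trace $u\varphi(\cdot,0)$ vanishes on $\Sigma_{\mathcal{D}}$. Applying Lemma \ref{lem:traceineq} to $U\varphi$ and expanding $|\nabla(U\varphi)|^2\le 2\varphi^2|\nabla U|^2+2U^2|\nabla\varphi|^2$ gives
\[
C_{\mathcal{D}}\Bigl(\int_\Omega (u|\varphi|)^{2_s^*}dx\Bigr)^{2/2_s^*}\le 2\!\int_{\mathscr{C}_{\Omega}}\!y^{1-2s}|\nabla U|^2\varphi^2\,dxdy+2\!\int_{\mathscr{C}_{\Omega}}\!y^{1-2s}U^2|\nabla\varphi|^2\,dxdy.
\]
To handle the first term I plug $U\varphi^2$ as a test function into \eqref{def:soldebil} (admissible because $U\in L^\infty(\mathscr{C}_{\Omega})$ by the weak maximum principle for \eqref{extension_problem}) and apply Young's inequality to the cross term $\int y^{1-2s}U\varphi\,\nabla U\cdot\nabla\varphi$, obtaining the Caccioppoli-type bound
\[
\int_{\mathscr{C}_{\Omega}}y^{1-2s}|\nabla U|^2\varphi^2\,dxdy\leq C\Bigl(\int_\Omega fu\varphi^2(x,0)\,dx+\int_{\mathscr{C}_{\Omega}}y^{1-2s}U^2|\nabla\varphi|^2\,dxdy\Bigr).
\]
The remaining term $\int fu\varphi^2$ is split by Cauchy--Schwarz using $f\in L^\infty$:
\[
\int_\Omega fu\varphi^2\,dx\le\|f\|_\infty\Bigl(\int_\Omega u^2\varphi^2\,dx\Bigr)^{1/2}\Bigl(\int_\Omega\varphi^2\,dx\Bigr)^{1/2},
\]
which reduces the whole endpoint to the base case $r=0$, namely $\int_\Omega\varphi^2(x,0)dx\le C\bigl(\int y^{1-2s}U^2|\nabla\varphi|^2+\int u^2\varphi^2\bigr)$.

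\emph{The base case.} This is the core estimate and is where the dependence on $1/\!\int fd\,dz$ in the constant arises. The plan is to invoke a fractional Hopf-type lower bound $u(x)\gtrsim c_0 d^s(x)$ for $x$ near $\Sigma_{\mathcal{D}}$, with $c_0$ proportional to $\int_\Omega f(z)d(z)\,dz$ (derived from the strong maximum principle together with testing \eqref{problema1} against the torsion-type solution of \eqref{problema_v_1}). One then splits $\Omega=\{u\ge\delta\}\cup\{u<\delta\}$: on the first set $\int\varphi^2\le\delta^{-2}\int u^2\varphi^2$ immediately; on the second set, which by Hopf is contained in a one-sided neighborhood of $\Sigma_{\mathcal{D}}$, one combines $d^{2s}\le u^2/c_0^2$ with the fractional Hardy inequality \eqref{eq:hardy} applied to $U\varphi$ after a cut-off away from $\Sigma_{\mathcal{N}}$ (so that the cut-off lies in $\mathcal{X}_0^s$), in order to extract the weight $U^2$ inside the Dirichlet energy. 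Optimizing in $\delta$ yields the base case with a constant having precisely the claimed dependence on $\|u\|_\infty$, $\|f\|_\infty$ and $1/\!\int fd\,dz$.

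\emph{Interpolation for $0<r<2_s^*$.} Setting $\theta:=r/2_s^*\in(0,1)$, a direct calculation with $q/2=1+rs/N$, $2_s^*-2=4s/(N-2s)$ and $2_s^*=2N/(N-2s)$ yields $\theta\cdot 2_s^*+2(1-\theta)=q$, so
\[
\int_\Omega u^r|\varphi|^q\,dx=\int_\Omega(u^{2_s^*}|\varphi|^{2_s^*})^\theta|\varphi|^{2(1-\theta)}\,dx\le\Bigl(\int_\Omega u^{2_s^*}|\varphi|^{2_s^*}\,dx\Bigr)^\theta\Bigl(\int_\Omega\varphi^2\,dx\Bigr)^{1-\theta}
\]
by H\"older with conjugate exponents $1/\theta$ and $1/(1-\theta)$. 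Raising to the power $2/q$ and inserting the two endpoint inequalities produces exactly \eqref{eq:claim}. The main obstacle is the base case: carefully combining the fractional Hopf lower bound with Hardy's inequality in the mixed boundary setting (since \eqref{eq:hardy} as recalled here is stated for $\mathcal{X}_0^s$, whereas $\varphi$ only vanishes on $\Sigma_{\mathcal{D}}$) while tracking the explicit dependence of the constant on $\|u\|_\infty$, $\|f\|_\infty$ and $1/\!\int fd\,dz$.
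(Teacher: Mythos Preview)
Your three-step architecture (endpoints $r=0$, $r=2_s^*$, then H\"older interpolation) matches the paper exactly, and your treatment of the endpoint $r=2_s^*$ and of the interpolation are essentially the same as the paper's (the paper uses the exact identity $|\nabla(U\varphi)|^2=U^2|\nabla\varphi|^2+\nabla U\cdot\nabla(U\varphi^2)$ instead of Young/Caccioppoli, but this is cosmetic).

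The genuine gap is in your base case $r=0$. Your plan is to cut off $U\varphi$ near $\Sigma_{\mathcal{N}}$ so as to apply the Hardy inequality \eqref{eq:hardy}, which is only available on $\mathcal{X}_0^s(\mathscr{C}_\Omega)$. This runs into two obstructions. First, the sublevel set $\{u<\delta\}$ is a neighbourhood of the \emph{whole} zero set of $u$, which includes the interface $\Gamma=\Sigma_{\mathcal{D}}\cap\overline{\Sigma_{\mathcal{N}}}$; hence $\{u<\delta\}$ comes arbitrarily close to $\Sigma_{\mathcal{N}}$ and you cannot choose a cutoff $\eta$ that equals $1$ on $\{u<\delta\}$ while vanishing near $\Sigma_{\mathcal{N}}$. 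Second, even away from $\Gamma$, expanding $|\nabla(U\varphi\eta)|^2$ produces the bulk term $\int_{\mathscr{C}_\Omega}y^{1-2s}U^2\varphi^2|\nabla\eta|^2\,dxdy$, an integral over a slab of the cylinder that is \emph{not} dominated by either $\int_{\mathscr{C}_\Omega}y^{1-2s}U^2|\nabla\varphi|^2$ or the trace integral $\int_\Omega u^2\varphi^2$. There is no smallness to absorb it either, since the Hardy constant is fixed.

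The paper sidesteps both issues by replacing your factor $U\eta$ with $\Phi_1^s$, where $\Phi_1=E_s[\phi_1]$ is the extension of the first \emph{Dirichlet} eigenfunction. Because $\Phi_1$ vanishes on the entire lateral boundary, $\varphi\Phi_1^s\in\mathcal{X}_0^s(\mathscr{C}_\Omega)$ with no cutoff needed, and Hardy together with $\phi_1\ge c_1 d$ gives $\int_\Omega\varphi^2\le C\int y^{1-2s}|\nabla(\varphi\Phi_1^s)|^2$. The expansion of $|\nabla(\varphi\Phi_1^s)|^2$ is then handled via the algebraic identity
\[
|\nabla(\varphi\Phi_1^s)|^2=\Phi_1^{2s}|\nabla\varphi|^2+s\,\nabla\Phi_1\cdot\nabla(\varphi^2\Phi_1^{2s-1})+s(1-s)\varphi^2\Phi_1^{2s-2}|\nabla\Phi_1|^2,
\]
and the cross term is converted, through the weak eigenvalue equation for $\Phi_1$, into the pure trace term $\lambda_1^s\int_\Omega\phi_1^{2s}\varphi^2$. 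Only at the very end does one compare with $u$, via $\phi_1^s\le Cu$ and $\Phi_1^s\le CU$. So the key idea you are missing is to multiply $\varphi$ by the Dirichlet eigenfunction (raised to the power $s$) rather than by $U$ itself; this is what makes Hardy applicable globally and turns the ``bad'' gradient terms into harmless boundary integrals.
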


\begin{proof}
We divide the proof into three steps according to the cases $r=0$, $r=2^*_s$ and the interpolation case $r\in (0,2^*_s)$.

\medskip
\noindent {\underline{\bf Step 1}: Case $r=0$.}\newline
We start by proving that, for all $\varphi \in \mathcal{X}_{\Sigma_{\mathcal{D}}}^s(\mathscr{C}_{\Omega})\cap L^{\infty}(\mathscr{C}_{\Omega})$,
\begin{equation*}
\int_\Omega \varphi^2(x,0)dx\leq C \left( \int_{\mathscr{C}_{\Omega}} y^{1-2s} U^2 |\nabla \varphi|^2 dxdy + \int_\Omega u^2(x) \varphi^2(x,0)dx\right).
\end{equation*}
Let $\phi_1$ be the first eigenfunction of $(-\Delta)^s$ under homogeneous Dirichlet boundary condition, 
\begin{equation*}
        \left\{
        \begin{tabular}{rl}
        $(-\Delta)^s\phi_{1}=\lambda_1^s \phi_{1}$ & in $\Omega$, \\
        $\phi_{1}=0\mkern+28mu$ & on $\partial\Omega$.
        \end{tabular}
        \right.
\end{equation*}
As in \eqref{def:soldebil},  in terms of the $s$-harmonic extension of $\phi_1$, denoted by $\Phi_1=E_s[\phi_1]$, we have 
\begin{equation}\label{eq:fi1}
\kappa_s\int_{\mathscr{C}_{\Omega}} y^{1-2s} \nabla\Phi_1 \nabla \Psi dxdy=\lambda_1^s\int_\Omega \phi_{1}(x) \Psi(x,0)dx \quad \text{for all } \Psi \in\mathcal{X}_0^s(\mathscr{C}_{\Omega}).
\end{equation}
Let us remark that $\Phi_1\in \mathcal{C}^{\gamma}(\overline{\mathscr{C}}_{\Omega})\cap L^{\infty}(\mathscr{C}_{\Omega})$ for some $\gamma\in(0,1)$ (cf. \cite[Theorem 4.7]{Braendle2013} and \cite[Corollary 4.8]{Braendle2013}). Because of the spectral definition of the fractional operator, the function $\phi_1$ is also the first eigenfunction of the classical Laplace operator $(-\Delta)$ under homogeneous Dirichlet boundary condition, hence, there exists a constant $c_1>0$ (depending only on $\Omega$) such that
\begin{equation}\label{eq:rel:fi1:dist}
\phi_1(x) \geq c_1 d(x) \quad  \mbox{for } x\in\Omega.
\end{equation}
Using \eqref{eq:rel:fi1:dist} and \eqref{eq:hardy} with $F=\varphi\Phi_1^s$ (note that $\varphi\Phi_1^s\in\mathcal{X}_{\Sigma_{\mathcal{D}}}^s(\mathscr{C}_{\Omega})$), we get
\begin{equation}\label{ineq:dist}
\int_\Omega \varphi^2(x,0) \, dx \leq c_1\int_\Omega \frac{\varphi^2(x,0) \phi_1^{2s} (x) }{d^{2s}(x)}\, dx\leq  C\int_{\mathscr{C}_{\Omega}}y^{1-2s}|\nabla (\varphi \Phi_1^s)|^2 dxdy.
\end{equation}
Next, we observe that 
\begin{equation}\label{est}
\begin{split}
|\nabla (\varphi \Phi_1^s)|^2=&\Phi_1^{2s}|\nabla \varphi|^2+2s\varphi\Phi_1^{2s-1}\nabla\varphi\nabla\Phi_1+s^2\varphi^2\Phi_1^{2s-2}|\nabla \Phi_1|^2\\
=&\Phi_1^{2s}|\nabla \varphi|^2+s\nabla\Phi_1\nabla(\varphi^2\Phi_1^{2s-1})+s(1-s)\varphi^2\Phi_1^{2s-2}|\nabla\Phi_1|^2.
\end{split}
\end{equation}
On the other hand, since $\frac12<s<1$, using the Cauchy--Schwarz and $\varepsilon$--Young inequalities, we get
\begin{align*}
\nabla\Phi_1\nabla(\varphi^2\Phi_1^{2s-1})=&\,2\varphi\Phi_1^{2s-1}\nabla\Phi_1\nabla\varphi+(2s-1)\varphi^2\Phi_1^{2s-2}|\nabla\Phi_1|^2\\
\geq&-\frac{1}{\varepsilon}\Phi_1^{2s}|\nabla\varphi|^2+(2s-1-\varepsilon)\varphi^2\Phi_1^{2s-2}|\nabla\Phi_1|^2.
\end{align*}
for some $\varepsilon>0$ such that $2s-1-\varepsilon>0$. Then,
\begin{equation*}
\varphi^2\Phi_1^{2s-2}|\nabla\Phi_1|^2\leq \frac{1}{2s-1-\varepsilon}\left(\frac{1}{\varepsilon}\Phi_1^{2s}|\nabla\varphi|^2+\nabla\Phi_1\nabla(\varphi^2\Phi_1^{2s-1})\right).
\end{equation*}
As a consequence, from \eqref{est}, we find
\begin{equation*}
\begin{split}
|\nabla (\varphi \Phi_1^s)|^2\leq \left(1+\frac{s(1-s)}{\varepsilon(2s-1-\varepsilon)}\right)\Phi_1^{2s}|\nabla \varphi|^2+\left(s+\frac{s(1-s)}{2s-1-\varepsilon}\right)\nabla\Phi_1\nabla(\varphi^2\Phi_1^{2s-1}).
\end{split}
\end{equation*}
Therefore, because of \eqref{ineq:dist} and \eqref{eq:fi1},
\begin{equation*}
\begin{split}
\int_\Omega \varphi^2(x,0)\, dx &\leq C\int_{\mathscr{C}_{\Omega}}y^{1-2s}|\nabla (\varphi \Phi_1^s)|^2 dxdy\\
&\leq C_1\int_{\mathscr{C}_{\Omega}}y^{1-2s}\Phi_1^{2s}|\nabla \varphi|^2dxdy+C_2\int_{\mathscr{C}_{\Omega}}y^{1-2s}\nabla\Phi_1\nabla(\varphi^2\Phi_1^{2s-1})dxdy\\
&= C_1\int_{\mathscr{C}_{\Omega}}y^{1-2s}\Phi_1^{2s}|\nabla \varphi|^2dxdy+\frac{C_2\lambda_1^s}{\kappa_s}\int_\Omega \phi_{1}^{2s}(x) \varphi^2(x,0)dx.
\end{split}
\end{equation*}
Finally, given $u$ the solution to \eqref{problema1}, because of the H\"older regularity of solutions to fractional elliptic problems with mixed boundary data (cf. \cite[Theorem 1.1]{Carmona2020}), we have $u\in \mathcal{C}^{\gamma}(\overline{\Omega})$ for some $\gamma\in(0,\frac12)$. Moreover, since $\Omega$ is a smooth bounded domain, we also have $\phi_1\in\mathcal{C}_0^{\infty}(\overline{\Omega})$ and, thus, $\phi_1^s\in\mathcal{C}^s(\overline{\Omega})$. As consequence, since $\frac12<s<1$, there exists a constant $C>0$ such that $\phi_{1}^{s} \leq C u $ and, hence, $\Phi_1^s\leq C U$. Then, we conclude
\begin{equation*}
\int_\Omega \varphi^2(x,0)\, dx\leq C\left(\int_{\mathscr{C}_{\Omega}}y^{1-2s}U^2|\nabla \varphi|^2dxdy+\int_\Omega u^2(x) \varphi^2(x,0)dx\right),
\end{equation*}
for some constant $C>0$.

\medskip 
\noindent {\underline{\bf Step 2}: Case $r=2_s^*$.}\newline
We continue by proving that, for all $\varphi \in \mathcal{X}_{\Sigma_{\mathcal{D}}}^s(\mathscr{C}_{\Omega})\cap L^{\infty}(\mathscr{C}_{\Omega})$,
\begin{equation*}
\left(\int_\Omega\left(u(x)|\varphi(x,0)|\right)^{2^*_s}dx\right)^{\frac{2 }{2^*_s} }\leq C\left( \int_{\mathscr{C}_{\Omega}} y^{1-2s} U^2 |\nabla \varphi|^2 dxdy+\int_\Omega u^2(x) \varphi^2(x,0)dx\right).
\end{equation*}
Since by hypothesis $f\in L^{\infty}(\Omega)$, repeating step by step the Moser--type proof done for fractional elliptic problems with Dirichlet boundary data (cf. \cite[Theorem 4.7]{Braendle2013}), we get that $U=E_s[u]\in L^{\infty}(\mathscr{C}_{\Omega})$, being $u$ the solution to \eqref{problema1}. Thus, $U\varphi^2 \in\mathcal{X}_{\Sigma_{\mathcal{D}}}^s(\mathscr{C}_{\Omega})$ and, because of \eqref{eq:traceineq} and \eqref{def:soldebil}, we obtain (the constants may vary line to line)
\begin{align*}
\left(\int_\Omega (u(x)|\varphi(x,0)|)^{2^*_s}dx\right)^{\frac{2}{2^*_s}}&\leq  C\int_{\mathscr{C}_{\Omega}}y^{1-2s}|\nabla (U \varphi)|^2 dxdy \\
&=C\int_{\mathscr{C}_{\Omega}} y^{1-2s}U^2|\nabla \varphi|^2dxdy +
C\int_{\mathscr{C}_{\Omega}} y^{1-2s}\nabla U \nabla (U\varphi^2)dxdy\\
&= C \int_{\mathscr{C}_{\Omega}} y^{1-2s}U^2|\nabla \varphi|^2dxdy + C
\int_{\Omega} f(x) u(x) \varphi^2(x,0)dx\\
&\leq C  \int_{\mathscr{C}_{\Omega}} y^{1-2s}U^2|\nabla \varphi|^2dxdy +
C\|fu\|_{L^\infty (\Omega)}\int_{\Omega} \varphi^2(x,0)dx\\
&\leq  C \left(\int_{\mathscr{C}_{\Omega}} y^{1-2s}U^2|\nabla \varphi|^2dxdy + 
\int_{\Omega} u^2(x) \varphi^2(x,0)dx\right),
\end{align*} 
where we have used that $u\in L^{\infty}(\Omega)$  (cf. \cite[Theorem 3.7]{Carmona2020}) and Step 1 in the last inequality.

\medskip 
\noindent {\underline{\bf Step 3}: Case $r\in(0,2_s^*)$. }\newline
Finally, we prove inequality \eqref{eq:claim}. By H\"older's inequality, 
Step 1 and Step 2 we conclude
\begin{align*}
 \int_\Omega u^r(x) |\varphi (x,0)|^q dx=& \int_\Omega u^r(x) |\varphi|^{r}(x,0)|\varphi|^{2+\frac{2rs}N-r}(x,0)dx\\
 \leq & \left(\int_\Omega \varphi^{2}(x,0)dx\right)^{1+\frac{rs}N-\frac r2} \left(\int_\Omega u^{2^*_s}(x) |\varphi (x,0) |^{2^*_s}dx\right)^{\frac{r}{2^*_s}}\\
 \leq & C\left( \int_{\mathscr{C}_{\Omega}} y^{1-2s} U^2 |\nabla \varphi|^2 dxdy + \int_\Omega u^2(x) \varphi^2(x,0)dx\right)^{\frac q2},
\end{align*}
since $\frac q2=1+\frac{rs}N$.
\end{proof}

\begin{proof}[Proof of Theorem \ref{lem:davila2}]
First, we observe that it is enough to prove the result in the case $g\geq 0$. The general case is deduced applying this argument to
the positive and negative parts of $g$ respectively.

Let $u,v\in H_{\Sigma_{\mathcal{D}}}^s(\Omega)$ be the solutions to \eqref{problema1} and \eqref{prob_v_com_1} respectively. Then, $U=E_s[u],V=E_s[v]\in\mathcal{X}_{\Sigma_{\mathcal{D}}}^s(\mathscr{C}_{\Omega})$ are the respective solutions to the extension problems
\begin{equation*}
        \left\{
        \begin{array}{rll}
           -\text{div}(y^{1-2s}\nabla U)&\!\!\!\!=0  &  \mbox{ in } \mathscr{C}_{\Omega} , \\
         B(U)&\!\!\!\!=0   &  \mbox{ on } \partial_L\mathscr{C}_{\Omega} , \\
         U(x,0)&\!\!\!\!=u(x)  &   \mbox{ on } \Omega\times\{y=0 \},\vspace{0.05cm}\\
        \displaystyle \frac{\partial U}{\partial \nu^s}&\!\!\!\!=f  &  \mbox{ on } \Omega\times\{y=0 \},
        \end{array}
        \right.
\end{equation*}
and 
\begin{equation*}
        \left\{
        \begin{array}{rll}
           -\text{div}(y^{1-2s}\nabla V)&\!\!\!\!=0  & \mbox{ in } \mathscr{C}_{\Omega} , \\
         B(V)&\!\!\!\!=0   & \mbox{ on } \partial_L\mathscr{C}_{\Omega} , \\
         V(x,0)&\!\!\!\!=v(x)  &  \mbox{ on } \Omega\times\{y=0 \},\vspace{0.05cm}\\
        \displaystyle \frac{\partial V}{\partial \nu^s}&\!\!\!\!=g  &  \mbox{ on } \Omega\times\{y=0 \}.
        \end{array}
        \right.
\end{equation*}
Taking in mind \eqref{def:soldebil}, for every test function $\varphi \in\mathcal{X}_{\Sigma_{\mathcal{D}}}^s(\mathscr{C}_{\Omega})$, we have
\begin{equation}\label{eq1}
\kappa_s\int_{\mathscr{C}_{\Omega}} y^{1-2s}\nabla U\nabla \varphi dxdy=\int_{\Omega}\varphi(x,0) f(x)dx,
\end{equation}
and 
\begin{equation}\label{eq2}
\kappa_s\int_{\mathscr{C}_{\Omega}} y^{1-2s}\nabla V\nabla \varphi dxdy=\int_{\Omega}\varphi(x,0) g(x)dx.
\end{equation}
As we commented before, since $f\in L^{\infty}(\Omega)$ and $g\in L^p(\Omega)$, with $p>N/s$, repeating step by step the proof of \cite[Theorem 4.7]{Braendle2013} we get that $U,V\in L^{\infty}(\mathscr{C}_{\Omega})$. Let us also stress that $u,v\in L^{\infty}(\Omega)$ by \cite[Theorem 3.7]{Carmona2020}. Moreover, since $g\geq0$, by comparison with the respective Dirichlet problem, we can assume that $v\geq0$ and $V=E_s[v]\geq0$ (cf. \cite[Lemma 2.3]{Capella2011}).\newline
Then, for $\varepsilon>0$ and $k\geq0$, we define 
\begin{equation*}
\varphi_\varepsilon=\left(\frac{V}{U+\varepsilon}-k\right)_+\in \mathcal{X}_{\Sigma_{\mathcal{D}}}^s(\mathscr{C}_{\Omega})\cap L^{\infty}(\mathscr{C}_{\Omega}),
\end{equation*}
where $(\cdot)_+=\max\{0,\cdot\}$. Since $\varphi_{\varepsilon}$ is bounded, both $V\varphi_\varepsilon$ and $U \varphi_\varepsilon$ belong to $\mathcal{X}_{\Sigma_{\mathcal{D}}}^s(\mathscr{C}_{\Omega})$. Then, using $V\varphi_\varepsilon$ and $U \varphi_\varepsilon$ as a test function in \eqref{eq1} and \eqref{eq2} respectively, we get
\begin{equation*}
\kappa_s\int_{\mathscr{C}_{\Omega}} y^{1-2s} \varphi_\varepsilon \nabla U\nabla V dxdy+ \kappa_s\int_{\mathscr{C}_{\Omega}} y^{1-2s}V \nabla U\nabla \varphi_\varepsilon dxdy =  \int_{\Omega} \varphi_\varepsilon(x,0) v(x) f(x)dx,
\end{equation*}
and 
\begin{equation*}
\kappa_s\int_{\mathscr{C}_{\Omega}} y^{1-2s} \varphi_\varepsilon \nabla V\nabla U dxdy+ \kappa_s\int_{\mathscr{C}_{\Omega}} y^{1-2s} U \nabla V \nabla \varphi_\varepsilon dxdy =  \int_{\Omega} \varphi_\varepsilon(x,0)u(x) g(x)dx.
\end{equation*}
Hence, subtracting the above equalities,
\begin{equation}\label{mix}
 \kappa_s\int_{\mathscr{C}_{\Omega}}  y^{1-2s}(U \nabla V-V\nabla U)\nabla \varphi_\varepsilon dxdy= \int_{\Omega} \varphi_\varepsilon(x,0) (u(x)g(x)-v(x)f(x))dx\,.
\end{equation}
We observe now that
\begin{equation*}
(U+\varepsilon)^2|\nabla \varphi_\varepsilon|^2=(U \nabla V-V\nabla U)\nabla \varphi_\varepsilon 
+\varepsilon \nabla V \nabla\varphi_\varepsilon, 
\end{equation*}
and consequently, by \eqref{mix},
\begin{equation}\label{pre}
\begin{split}
\kappa_s\int_{\mathscr{C}_{\Omega}}y^{1-2s} (U+\varepsilon)^2|\nabla\varphi_\varepsilon|^2 dxdy=&\,\varepsilon  \kappa_s\int_{\mathscr{C}_{\Omega}}  y^{1-2s}
\nabla V\nabla \varphi_\varepsilon dxdy\\
&+\int_{\Omega} \varphi_\varepsilon(x,0) (u(x)g(x)-v(x)f(x))dx\\
=&\int_{\Omega} \varphi_\varepsilon(x,0)\big((u(x)+\varepsilon)g(x)-v(x)f(x)\big)dx\\
\leq&\int_{\Omega} \varphi_\varepsilon(x,0)
(u(x)+\varepsilon)g(x)dx.
\end{split}
\end{equation}
Finally, because of Lemma \ref{lem:wsob}, inequality \eqref{pre} reads as
\begin{equation*}
\left(\int_\Omega u^r(x)
|\varphi_\varepsilon(x,0)|^qdx\right)^\frac2q\leq C\left(
\int_{\Omega} \varphi_\varepsilon(x,0) (u(x)+\varepsilon)g(x)dx +
\int_{\Omega} u^2(x)\varphi_\varepsilon^2(x,0)dx \right),
\end{equation*}
with $\frac q2=1+\frac{rs}N$. On the other hand, we observe that, for $\varepsilon\to 0$, 
\begin{equation*}
(u(x)+\varepsilon)\varphi_\varepsilon(x,0)=(v-k(u+\varepsilon))_+\nearrow(v-ku)_+
\end{equation*}
and 
\begin{equation*}
\varphi_\varepsilon(x,0)\nearrow\left(\frac vu -k \right)_+.
\end{equation*}
Thus, denoting $w=\frac vu$, by the monotone convergence theorem we obtain
\begin{equation}\label{eq3}
\left(\int_{\Omega} u^r (w-k)_+^qdx\right)^\frac2q\leq C\left( \int_{\Omega} u(w-k)_+gdx + \int_{\Omega} u^2 (w-k)_+^2dx \right).
\end{equation}
Once we get inequality \eqref{eq3}, the rest of the proof follows by means of an iterative Stampacchia--type method. We include the argument for the reader's convenience.\newline
Let us set $r=\frac p{p-1}\in(1,2_s^*)$. Thus, using H\"older's inequality, from \eqref{eq3} we obtain that
\begin{equation}\label{eq:inter}
\begin{split}
\left(\int_{\Omega} u^r (w-k)_+^q dx\right)^\frac2q \leq&\ C\left( \int_{\Omega} u^{1-\frac rq}u^{\frac rq}(w-k)_+gdx
+\int_{\Omega} u^{2-\frac{2r}q}u^{\frac{2r}q} (w-k)_+^2dx \right)\\
\leq &\ C\|g\|_{L^p (\Omega)}\left(\int_\Omega u^r (w-k)_+^q dx\right)^{\frac1q}\left(\int_{\{w>k\}} u
^rdx\right)^{1-\frac1p-\frac1q}\\
&+ C \left(\int_{\Omega}u^r (w-k)_+^q dx\right)^{\frac2q}\left(\int_{\{w>k\}}u^{\frac{2(q-r)}{q-2}}dx\right)^{1-\frac2q}.
\end{split}
\end{equation}
Observe that $\frac q2=1+\frac{rs}N>1$ and $\frac{2(q-r)}{q-2}>0$ since, as $r<2_s^*$, we have $\frac qr = \frac2r+\frac{2s}N>1$. Moreover,
\begin{equation*}
\int_{\{w>k\}} u^{\frac{2(q-r)}{q-2}}dx\leq \int_{\{w>k\}} \left(\frac
vk\right)^{\frac{2(q-r)}{q-2}}dx\leq C \left(\frac
{\|v\|_{L^\infty(\Omega)}}{k}\right)^{\frac{2(q-r)}{q-2}}.
\end{equation*}
Let us define 
\begin{equation}\label{eq4}
k_0\vcentcolon=(2C)^{\frac {q}{2(q-r)}}\|v\|_{L^\infty(\Omega)},
\end{equation}
so that, for $k\geq k_0$, from \eqref{eq:inter} we get
\begin{equation*}
\left(\int_{\Omega} u^r ((w-k)_+)^qdx\right)^\frac{1}{q} \leq C\|g\|_{L^p}\left(\int_{\{w>k\}} u ^rdx\right)^{1-\frac1p-\frac1q}.
\end{equation*}
Using H\"older's inequality once more, we deduce
\begin{equation}\label{eq5}
\begin{split}
\int_{\Omega} u^r(w-k)_+dx&\leq \left(\int_{\Omega} u^r((w-k)_+)^qdx\right)^\frac{1}{q} \left(\int_{\{w>k\}} u^rdx\right)^{1-\frac{1}{q}}\\
&\leq C\|g\|_{L^p(\Omega)} \left(\int_{\{w>k\}} u^rdx\right)^{2-\frac1p-\frac2q},
\end{split}
\end{equation}
where $2-\frac1p-\frac2q=1+\frac1r-\frac2q>1$ since $p>\frac Ns$. Next, we set the function 
\begin{equation*}
a(k)\vcentcolon=\int_{\Omega} u^r(w-k)_+dx=\int_k^\infty\int_{\{w>t\}}u^rdxdt,
\end{equation*}
which satisfies 
\begin{equation*}
a'(k)=-\int_{\{w>k\}} u^rdx.
\end{equation*}
Then, denoting by $\gamma=2-\frac1p-\frac2q>1$, from \eqref{eq5} we find
\begin{equation*}
a(k)\leq C\|g\|_{L^p(\Omega)} (-a'(k))^\gamma\quad\text{for all } k\geq k_0.
\end{equation*}
Therefore, given  $k>k_0$ and integrating in the interval $[k_0,k]$, we get
\begin{equation*}
a^{1-\frac1\gamma}(k)\leq a^{1-\frac1\gamma}(k_0)-\frac{k-k_0}{C^{\frac1{\gamma}}
\|g\|_{L^p(\Omega)}^{\frac1{\gamma}}}.
\end{equation*}
Since $a(k)$ is nonnegative and nondecreasing and $\gamma>1$ the above inequality implies that $a(k)=0$ for some 
$k\leq C\|g\|_{L^p(\Omega)}^{\frac1\gamma}a^{1-\frac1\gamma}(k_0)+k_0$ and, hence,
\begin{equation}\label{eq6}
w \leq C\|g\|_{L^p(\Omega)}^{\frac1\gamma} a^{1-\frac1\gamma}(k_0)+k_0.
\end{equation}
In addition, from \eqref{eq5}, we have
\begin{equation*}
a(k_0)\vcentcolon=\int_{\Omega} u^r(w-k_0)_+dx\leq C\|g\|_{L^p(\Omega)} \left(\int_{\{w>k_0\}} u^rdx\right)^{2-\frac1p-\frac2q}\leq C\|g\|_{L^p(\Omega)}.
\end{equation*}
Then, using \eqref{eq6} and \eqref{eq4}, we conclude
\begin{equation*}
\frac{v}{u}=w\leq C(\|g\|_{L^p(\Omega)} +\|v\|_{L^{\infty}(\Omega)} )\leq C\|g\|_{L^p(\Omega)}, 
\end{equation*}
since, by \cite[Theorem 3.7]{Carmona2020}, we have $\|v\|_{L^\infty(\Omega)}\leq C(N,s,|\Sigma_{\mathcal{D}}|)|\Omega|^{\frac{2s}{N}-\frac{1}{p}}\|g\|_{L^p(\Omega)}$.
\end{proof}
Using Theorem \ref{lem:davila2} we can now prove Theorem \ref{teo:dav}.

\begin{proof}[Proof of Theorem \ref{teo:dav}]
First, we observe the following: Since $f\in C^\infty_0(\Omega)$ we have that $(-\Delta)^{1-s}f$ is bounded and we can choose $c_f>0$ such that  
\begin{equation*}
(-\Delta)^{1-s}f+c_f\geq0 \quad\text{in }\Omega,
\end{equation*}
and $w$ such that 
\begin{equation}\label{eq:wcero}
\left\{
        \begin{tabular}{rcl}
        $(-\Delta)w=c_f$ & &\mbox{in } $\Omega$, \\
        $w=0\mkern+7mu$  & &on $\partial\Omega$,
        \end{tabular}
\right.
\end{equation}
At one hand, fix $x_0 \in\Omega$ and let $\rho<\frac{1}{4}\textrm{dist}(x_0,\partial\Omega)$.  Let us recall that, for $w$ satisfiying \eqref{eq:wcero}, we have (cf. \cite[p.71 Problem 4.5]{Gilbarg1983})
\begin{equation}\label{eq:wuno}
w(x)=\frac{1}{|B_{2\rho}(x)|}\int_{B_{2\rho}(x)}w(z)dz+\frac {2c_f }{N(N+2)}\rho^2\quad\text{for }x\in B_{\rho}(x_0).
\end{equation}
Moreover, since $f\in\mathcal{C}_0^{\infty}(\Omega)$, by interior regularity (cf. \cite[Lemma 2.9]{Capella2011}, \cite[Lemma 4.4]{Cabre2014}) we have $u\in\mathcal{C}^{\infty}(B_{2\rho}(x))$ for any $x\in B_{\rho}(x_0)$. Thus, $(-\Delta)u$ is well defined in $B_{2\rho}(x)$ and 
\begin{equation*}
(-\Delta) (u+w)=(-\Delta)^{1-s}f+(-\Delta)w\geq0\quad\text{in }B_{2\rho}(x),
\end{equation*}
for any $x\in B_{\rho}(x_0)$. Then, as $u+w$ is superharmonic,
\begin{equation*}
u(x)+w(x)\geq\frac{1}{|B_{2\rho}(x)|}\int_{B_{2\rho}(x)}(u(z)+w(z))dz\quad \text{for }x\in B_{\rho}(x_0).
\end{equation*}
and, hence, by \eqref{eq:wuno}, 
\begin{equation}\label{eq:wdosb}
u(x)+\frac {2c_f }{N(N+2)}\rho^2\geq\frac{1}{|B_{2\rho}(x)|}\int_{B_{2\rho}(x)}u(z)dz\quad \text{for }x\in B_{\rho}(x_0).
\end{equation}

On the other hand, since $u\in\mathcal{C}^{\gamma}(\overline{\Omega})$ for some $\gamma\in(0,\frac12)$ (cf. \cite[Theorem 1.2]{Carmona2020}) and $\frac12<s<1$, there exists a constant $c_1>0$ such that $u(x)\geq c_1d^s(x)$ in the whole $\Omega$.
Then, we can choose $\rho$ small enough in order to have $c_1\rho^s \geq \frac {4 c_f}{N(N+2)}\rho^2$ and hence
\begin{equation*}
\frac{1}{|B_{2\rho}(x)|}\int_{B_{2\rho}(x)} u(z)dz \geq \frac{1}{|B_{2\rho}(x)|}\int_{B_{2\rho}(x)} c_1 d^s(z)dz \geq c_1\rho^s \geq \frac {4 c_f}{N(N+2)}\rho^2,
\end{equation*}
since $\textrm{dist}(\partial B_{2\rho}(x),\partial\Omega)\geq\rho$ for any $x\in B_{\rho}(x_0)$. Then, by \eqref{eq:wdosb}, we get
\begin{equation}\label{eq:wtres}
u(x)\geq \frac{1}{|B_{2\rho}(x)|}\int_{B_{2\rho}(x)}u(z)dz- \frac{2c_f}{N(N+2)} \rho^2\geq\frac{1}{2|B_{2\rho}(x)|}\int_{B_{2\rho}(x)}u(z)dz .
\end{equation}
Now we proceed as in \cite[Theorem 1]{Davila2001} which in turn is based on \cite[Lemma 3.2]{Brezis1998}.\newline  
Fixed $x_0\in \Omega$ and $\rho<\min\left\{\frac14d(x_0),\left(\frac{c_1N(N+2)}{4c_f}\right)^\frac1{2-s}\right\}$, we consider $f_0\in C_0^\infty(B_\rho(x_0))$ with $0\leq f_0\leq 1$, $f\not\equiv0$. Let $u_0$ be the solution to
\begin{equation*}
\left\{
        \begin{tabular}{rcl}
        $(-\Delta)^su_0=f_0$ & &\mbox{in } $\Omega$, \\
        $B(u_0)=0\mkern+7mu$  & &on $\partial\Omega$.
        \end{tabular}
\right.
\end{equation*}
Using Theorem \ref{lem:davila2}, there exists a positive constant $C$, depending only on $\Omega$, $\|u_0\|_{L^{\infty}(\Omega)}$ and $\|v\|_{L^{\infty}(\Omega)}$ such that $u_0\geq C_0 v$ in $\Omega$. Next, for $x\in B_{2\rho}(x_0)$ we have $\textrm{supp}(f_0)\subset B_{2\rho}(y)\subset \Omega$. Then, because of \eqref{eq:wtres}, we have for every $x\in\overline B_{\rho}(x_0)$
\begin{align*}
u(x)\geq&\frac1{2|B_{2\rho} (x)|}\int_{B_{2\rho}(x)} u(z)dz\geq\frac1{2|\Omega|} \int_{\Omega} u f_0dz\\
\geq& c'\int_{\Omega} uf_0dz=c'\int_{\Omega} f u_0dz\geq c'' \int_{\Omega} f vdz\\
\geq&  \lambda u_0(x),
\end{align*}
where 
\begin{equation*}
\lambda\vcentcolon=\frac{c''}{\|u_0\|_{L^{\infty}(\Omega)}}\left(\int_\Omega f vdx\right).
\end{equation*} 
Then $u-\lambda u_0\geq 0$ in $\overline B_\rho(x_0)$ and, in particular, $u-\lambda u_0\geq 0$ in $\partial B_\rho(x_0)$. Therefore, 
\begin{equation*}
\left\{
        \begin{tabular}{rcl}
        $(-\Delta)^s(u-\lambda u_0)=f\geq 0$ & &\mbox{in } $\Omega\setminus \overline B_\rho(x_0)$, \\
        $u-\lambda u_0\geq 0\mkern+35mu$   & &on $\partial B_\rho(x_0)$,\\
        $B(u-\lambda u_0)=0\mkern+35mu$  & &on $\partial\Omega$.
        \end{tabular}
\right.
\end{equation*}
Thus, by comparison, $u-\lambda u_0\geq 0$ in $\Omega\setminus \overline B_\rho(x_0)$ and, hence,
\begin{equation*}
u(x)\geq c''' \left(\int_\Omega f vdz\right) u_0(x) \geq c\left(\int_\Omega f vdz\right) v(x)\quad\text{for all }\forall x\in \Omega,
\end{equation*}
which gives us the desired conclusion.
\end{proof}

\begin{thebibliography}{14}

\bibitem{Barrios2020}
B. Barrios and M.~Medina, {\em Strong maximum principles for fractional
  elliptic and parabolic problems with mixed boundary conditions}, Proceedings
  of the Royal Society of Edinburgh. Section A. Mathematics, {\bf 150} (2020),
  pp.~475--495.

\bibitem{Braendle2013}
C.~Br\"{a}ndle, E.~Colorado, A.~de~Pablo  and U.~S\'{a}nchez, {\em A
  concave-convex elliptic problem involving the fractional {L}aplacian},
  Proceedings of the Royal Society of Edinburgh. Section A. Mathematics, {\bf
  143} (2013), pp.~39--71.

\bibitem{Brezis1998}
H.~Brezis and X.~Cabr\'{e}, {\em Some simple nonlinear {PDE}'s without
  solutions}, Bollettino della Unione Matematica Italiana. Serie VIII. Sezione
  B. Articoli di Ricerca Matematica, {\bf 1} (1998), pp.~223--262.

\bibitem{Cabre2014}
X.~Cabr\'{e} and Y.~Sire, {\em Nonlinear equations for fractional {L}aplacians,
  {I}: {R}egularity, maximum principles, and {H}amiltonian estimates}, Annales
  de l'Institut Henri Poincar\'{e}. Analyse Non Lin\'{e}aire, {\bf 31} (2014),
  pp.~23--53.

\bibitem{Cabre2010}
X.~Cabr\'{e} and J.~Tan, {\em Positive solutions of nonlinear problems
  involving the square root of the {L}aplacian}, Advances in Mathematics, {\bf
  224} (2010), pp.~2052--2093.

\bibitem{Caffarelli2007}
L.~Caffarelli and L.~Silvestre, {\em An extension problem related to the
  fractional {L}aplacian}, Communications in Partial Differential Equations,
  {\bf 32} (2007), pp.~1245--1260.

\bibitem{Capella2011}
A.~Capella, J.~D\'{a}vila, L.~Dupaigne  and Y.~Sire, {\em Regularity of radial
  extremal solutions for some non-local semilinear equations}, Communications
  in Partial Differential Equations, {\bf 36} (2011), pp.~1353--1384.

\bibitem{Carmona2020}
J.~Carmona, E.~Colorado, T.~Leonori  and A.~Ortega, {\em Regularity of
  solutions to a fractional elliptic problem with mixed Dirichlet-Neumann
  boundary data}, Advances in Calculus of Variations, {\bf 0} (2020).

\bibitem{Colorado2019}
E.~Colorado and A.~Ortega, {\em The {B}rezis-{N}irenberg problem for the
  fractional {L}aplacian with mixed {D}irichlet-{N}eumann boundary conditions},
  Journal of Mathematical Analysis and Applications, {\bf 473} (2019),
  pp.~1002--1025.

\bibitem{Colorado2003}
E.~Colorado and I.~Peral, {\em Semilinear elliptic problems with mixed
  {D}irichlet-{N}eumann boundary conditions}, Journal of Functional Analysis,
  {\bf 199} (2003), pp.~468--507.

\bibitem{Davila2001}
J.~D\'{a}vila, {\em A strong maximum principle for the {L}aplace equation with
  mixed boundary condition}, Journal of Functional Analysis, {\bf 183} (2001),
  pp.~231--244.

\bibitem{Filippas2013}
S.~Filippas, L.~Moschini  and A.~Tertikas, {\em Sharp trace
  {H}ardy-{S}obolev-{M}az'ya inequalities and the fractional {L}aplacian},
  Archive for Rational Mechanics and Analysis, {\bf 208} (2013), pp.~109--161.

\bibitem{Gilbarg1983}
D.~Gilbarg and N.~S. Trudinger, {\em Elliptic partial differential equations of
  second order}, vol.~{\bfseries 224} of Grundlehren der Mathematischen
  Wissenschaften [Fundamental Principles of Mathematical Sciences],
  Springer-Verlag, Berlin, second~ed., 1983.

\bibitem{Lions1972}
J.-L. Lions and E.~Magenes, {\em Non-homogeneous boundary value problems and
  applications. {V}ol. {I}}, Springer-Verlag, New York-Heidelberg, 1972.
\newblock Translated from the French by P. Kenneth, Die Grundlehren der
  mathematischen Wissenschaften, Band 181.

\end{thebibliography}

\end{document}